\theoremstyle{plain}
\newtheorem{theorem}{Theorem}[section]
\newtheorem{lemma}[theorem]{Lemma}
\newtheorem{proposition}[theorem]{Proposition}
\theoremstyle{definition}
\newtheorem{example}[theorem]{Example}
\newtheorem{definition}[theorem]{Definition}
\newcommand{\RR}{\mathbb{R}}
\newcommand{\del}{{\partial}}
\newcommand{\dbar}{{\overline{\delta}}}
\newcommand{\ebar}{{\overline{\epsilon}}}
\newcommand{\GH}{{\text{GH}}}
\newcommand{\F}{\mathcal{F}}
\newcommand{\R}{\mathcal{R}}
\newcommand{\rstr}{\:\mbox{\rule{0.1ex}{1.2ex}\rule{1.1ex}{0.1ex}}\:}
\newcommand{\dF}{d_{\mathcal{F}}}
\newcommand{\dGH}{d_{GH}}
\newcommand{\be}{\begin{equation}}
\newcommand{\ee}{\end{equation}}
\DeclareMathOperator{\Ric}{Ric}
\DeclareMathOperator{\Vol}{vol}
\DeclareMathOperator{\diam}{diam}
\DeclareMathOperator{\spt}{spt}
\DeclareMathOperator{\Lip}{Lip}
\DeclareMathOperator{\0}{\mathbf{0}}
\DeclareMathOperator{\M}{\mathbf{M}}
\DeclareMathOperator{\I}{\mathbf{I}}
\title[Intrinsic flat vs Gromov-Hausdorff]{Intrinsic flat convergence with bounded Ricci curvature}
\author{Michael Munn}
\address{New York University, Courant Institute, 251 Mercer St, New York, NY 10012}
\email{munn@nyu.edu}
\begin{document}
\maketitle

\begin{abstract}
In this paper we  address the relationship between Gromov-Hausdorff limits and intrinsic flat limits of complete Riemannian manifolds. In \cite{SormaniWenger2010, SormaniWenger2011}, Sormani-Wenger show that for a  sequence of Riemannian manifolds with nonnegative Ricci curvature, a uniform upper bound on diameter, and non-collapsed volume, the intrinsic flat limit exists and agrees with the Gromov-Hausdorff limit. This can be viewed as a non-cancellation theorem showing that for such sequences points don't cancel each other out in the limit. 

In this paper, we extend this result to show that there is no cancellation when replacing the assumption of nonnegative Ricci curvature with a two-sided bound on the Ricci curvature. 
MSC: 53C23.

\keywords{ Keywords: intrinsic flat convergence\and
Gromov-Hausdorff convergence\and Ricci curvature}
\end{abstract}

\section{Introduction}
\label{section-introduction}

This paper is concerned with the relationship between Gromov-Hausdorff limits and intrinsic flat limits of complete oriented Riemannian manifolds. The Gromov-Hausdorff distance $\dGH$, introduced in \cite{Gromov}, generalizes to pairs of compact metric spaces the Hausdorff distance for subsets of a metric space. In this way, $\dGH$ defines a metric on all compact metric spaces so that for two Riemannian manifolds $M_1, M_2$, $\dGH(M_1, M_2) = 0$ if and only if there exists an isometry between them. The intrinsic flat distance $\dF$, introduced in \cite{SormaniWenger2011}, generalizes the flat distance of Federer-Fleming, defined for integral current spaces in $\mathbb{R}^n$, to pairs of metric integral current spaces. As such, $\dF$ depends on the Riemannian manifolds as oriented metric spaces equipped with an integration form so that $\dF(M_1, M_2) = 0$ if and only if there is an orientation preserving isometry between them. 

Both the Gromov-Hausdorff and flat distance play an important role in metric and Riemannian geometry and geometric measure theory. And, since its introduction, the intrinsic flat convergence has proven very useful in studying properties of both Riemannian manifolds and integral current metric spaces spanning both these fields \cite{LakzianSormani, LeeSormani}. Here, we extend results of Sormani-Wenger \cite{SormaniWenger2010} giving sufficient conditions which guarantee that the limit spaces of these two notions of convergence must agree.

It follows from Gromov's Precompactness Theorem that any sequence of compact Riemannian manifolds $(M_i, g_i)$ with bounded Ricci curvature $\Ric_{M_i} \geq -(n-1)$ and bounded diameter $\diam(M_i) \leq D$ has a subsequence which converges in the Gromov-Hausdorff topology to a compact metric space $(X,d)$. It is important to understand how the Gromov-Hausdroff limits compare with the intrinsic flat limits. To this end, Sormani-Wenger show  

\begin{theorem}
\label{SWThm}
{\em(c.f. \cite[Theorem 4.16]{SormaniWenger2010} and \cite[Theorem 7.1]{SormaniWenger2011})} 
If a sequence of oriented Riemannian manifolds $(M_i, g_i)$ satisfies the bounds
\[
\Ric_{M_i} \geq 0,  \quad \diam(M_i) \leq D, \quad  \Vol(M_i) \geq v_0>0,
\]
then the Gromov-Hausdorff limit and the intrinsic flat limit agree; i.e., if the sequence $(M_i, g_i)$ converges in the Gromov-Hausdorff sense to a metric space $(X,d)$, then it converges in the intrinsic flat sense to the integral current space $(X,d,T)$, where $T$ denotes the $n$-dimensional integral current structure on $X$, and the support of $T$ coincides with the Gromov-Hausdorff limit $X$.
\end{theorem}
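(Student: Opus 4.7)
The plan is to realize the Gromov-Hausdorff convergence in a common ambient metric space $Z$, extract a flat-convergent subsequence of integral currents there, and use Bishop-Gromov volume comparison together with the non-collapsing hypothesis to rule out cancellation in the limit. Begin with Gromov's embedding theorem: after passing to a subsequence, there exist isometric embeddings $\varphi_i: M_i \hookrightarrow Z$ and $\varphi: X \hookrightarrow Z$ into a common compact metric space $Z$ for which $\varphi_i(M_i) \to \varphi(X)$ in Hausdorff distance inside $Z$.

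Next, view each $M_i$ as an integral current $T_i' \in \I_n(M_i)$ via its Riemannian volume form; since $M_i$ is closed, $\partial T_i' = 0$, and Bishop's upper volume bound under $\Ric \geq 0$, $\diam(M_i) \leq D$ yields $\M(T_i') = \Vol(M_i) \leq \omega_n D^n$. Pushing forward produces $T_i := \varphi_{i*} T_i' \in \I_n(Z)$ of uniformly bounded mass, zero boundary, and supports contained in a common compact subset of $Z$. Ambrosio-Kirchheim compactness then yields a further subsequence with $T_i \to T_\infty$ in flat norm on $Z$, and $T_\infty$ is again integral.

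The crux is identifying $\spt T_\infty = \varphi(X)$. The inclusion $\spt T_\infty \subseteq \varphi(X)$ is immediate from Hausdorff convergence of supports. For the reverse inclusion, fix $p \in X$ and $r > 0$, choose $p_i \in M_i$ with $\varphi_i(p_i) \to \varphi(p)$, and apply Bishop-Gromov:
\be
\Vol(B_r^{M_i}(p_i)) \;\geq\; \frac{r^n}{\diam(M_i)^n}\Vol(M_i) \;\geq\; \frac{v_0 r^n}{D^n}.
\ee
Since $\varphi_i$ is an isometric embedding, this gives $\M(T_i \rstr B_{2r}^Z(\varphi(p))) \geq v_0 r^n/D^n$ for all sufficiently large $i$. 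Lower semicontinuity of mass on open sets under flat convergence then forces $\M(T_\infty \rstr B_{2r}^Z(\varphi(p))) > 0$ for every $r > 0$, so $\varphi(p) \in \spt T_\infty$. Pulling $T_\infty$ back through $\varphi$ produces the desired integral current structure $(X,d,T)$, and the common embeddings $\varphi_i, \varphi$ witness $\dF(M_i, X) \to 0$.

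The main obstacle is precisely this non-cancellation step. A priori, a sequence of integral currents can converge in flat norm to something with strictly smaller support than the Hausdorff limit of their supports, because oppositely oriented sheets may cancel. The joint hypotheses $\Ric \geq 0$ and $\Vol(M_i) \geq v_0 > 0$ are what rule this out, via the Bishop-Gromov ball volume lower bound at every scale uniformly in $i$. This uniform ball mass estimate is the heart of Sormani-Wenger's noncancellation principle, and the crucial obstruction one must preserve when attempting to extend the theorem to two-sided Ricci bounds as in the sequel.
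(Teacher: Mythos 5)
Your set-up (Gromov embedding into a common compact $Z$, uniform mass bounds from Bishop's inequality, Ambrosio--Kirchheim compactness to extract a flat limit $T_\infty$ with $\spt T_\infty \subseteq \varphi(X)$) is fine and is indeed how the intrinsic flat limit is produced along the GH-convergent subsequence. The gap is at the crux step, and it is fatal as written: you invoke ``lower semicontinuity of mass on open sets under flat convergence'' to pass the estimate $\M\bigl(T_i \rstr B^Z_{2r}(\varphi(p))\bigr) \geq v_0 r^n/D^n$ to the limit, but semicontinuity goes the other way. For flat (or weak) convergence one only has $\|T_\infty\|(U) \leq \liminf_i \|T_i\|(U)$ for open $U$; the limit mass is bounded \emph{above} by the liminf, not below. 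Mass can simply vanish in the limit by cancellation of oppositely oriented sheets even though every $\|T_i\|(B)$ is uniformly positive: two nearby sheets of definite area with opposite orientations (e.g.\ the boundary of a thin slab) have flat norm tending to $0$ while their ball masses stay bounded below. So a uniform Bishop--Gromov ball-volume lower bound, by itself, does not rule out $\varphi(p) \notin \spt T_\infty$; ruling this out is exactly the hard content of the theorem, not a consequence of general current theory. (Example \ref{Main Ex} shows the same collapse of support in the intrinsic setting once the Ricci hypothesis is dropped, even with bounded diameter and volume.)

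This is also where your route diverges from what is actually done. The Sormani--Wenger proof that the paper cites does not argue through ball masses of the limit; it bounds from below the \emph{filling volumes/filling radii of the distance spheres} $S_r(p_i)$, using the geometric contractibility function that a lower Ricci bound plus non-collapsing provides (Perelman, Greene--Petersen), to show these slices cannot converge flatly to the zero current, and then concludes via the slicing/"points don't disappear" mechanism (the role played here by Lemma \ref{SormaniPreprintLemma}). The present paper's variant for two-sided Ricci bounds likewise works with the spheres $S^{M_i}_r(p_i)$, replacing the filling-radius input by Colding volume convergence plus Anderson $C^{1,\alpha}$ control at regular points and the Lakzian--Sormani estimate (Proposition \ref{lemma-LakzianSormani}). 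To repair your argument you would need some substitute for this slice-level non-vanishing input; the Bishop--Gromov bound alone cannot supply it.
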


From the properties of the intrinsic flat convergence, in general,  $\spt(T) \subset X$ and the inclusion may be strict; i.e. the limit current space is supported in a strict subset of the Hausdorff limit. This is because intrinsic flat limits do not include points with zero density. This phenomena is demonstrated in Example \ref{Main Ex} (c.f. Examples 6.1-6.3 in \cite{SormaniWenger2011}). With these examples in mind, the case of strict inclusion is often referred to as `cancellation'. In this sense, Theorem \ref{SWThm} can be viewed as a non-cancellation statement for intrinsic flat limits. That is to say, points in the Gromov-Hausdorff limit are not `lost' when viewed in the intrinsic flat limit. 

In addition, it follows from Theorem \ref{SWThm} that the limit space $X$ is $\mathcal{H}^n$ rectifiable. Thus, this non-cancellation theorem and the  intrinsic flat convergence provide a method of extending the rectifiability properties achieved by Cheeger-Colding \cite{ChCoI} to a much larger class of manifolds. 

In this paper, we prove a similar result for manifolds with bounded Ricci curvature. We show
\begin{theorem}
\label{MainThm}
For $H \in \mathbb{R}$, if a sequence of oriented Riemannian manifolds $(M_i, g_i)$ satisfies the bounds
\[
|\Ric_{M_i}| \leq (n-1)H,  \quad \diam(M_i) \leq D, \quad  \Vol(M_i) \geq v_0>0,
\]
then the Gromov-Hausdorff limit and the intrinsic flat limit agree.
\end{theorem}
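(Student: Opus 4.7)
The plan is to reduce Theorem \ref{MainThm} to the mechanism behind Theorem \ref{SWThm}, substituting Anderson's $C^{1,\alpha}$ compactness in place of the Bishop--Gromov-based inputs used under nonnegative Ricci. Under the hypotheses $|\Ric_{M_i}| \leq (n-1)H$, $\diam(M_i) \leq D$, $\Vol(M_i) \geq v_0 > 0$, Anderson's theorem yields a uniform positive harmonic radius $r_0 = r_0(n,H,D,v_0) > 0$ at every point of every $M_i$. Consequently, each $p \in M_i$ admits a harmonic chart $\varphi_p : B_{r_0}(p) \to \RR^n$ that is $L$-bi-Lipschitz onto its image, with $L = L(n,H,D,v_0)$. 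This is the key new input: it upgrades the purely metric compactness coming from the lower Ricci bound to quantitative local bi-Lipschitz control, which is unavailable under lower Ricci bounds alone.

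From these charts I would extract the two local estimates driving the Sormani--Wenger non-cancellation argument. For every $p \in M_i$ and every $r \leq r_0$:
\begin{enumerate}
\item[(i)] a lower volume bound $\Vol(B_r(p)) \geq c\, r^n$, and
\item[(ii)] a filling estimate $\FillVol(\partial B_r(p)) \leq C\, r^{n+1}$,
\end{enumerate}
with $c, C$ depending only on $n, H, D, v_0$. Both are transferred from the standard Euclidean estimates through $\varphi_p$, absorbing only fixed powers of $L$. Estimate (i) forbids the Gromov--Hausdorff limit from acquiring zero-density points, while (ii) guarantees that the intrinsic flat limit is a genuine integral current whose local mass is controlled.

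With (i) and (ii) in hand, the remainder of the argument follows the scheme in \cite{SormaniWenger2010, SormaniWenger2011}. For each $x$ in the Gromov--Hausdorff limit $X$, I would pick $p_i \in M_i$ with $p_i \to x$ and consider small balls about $p_i$: the two estimates, combined with the integral-current compactness theory developed by Sormani--Wenger, show that the flat-limit current $T$ has positive mass in every metric ball about $x$, so $x \in \spt(T)$. Together with the standard inclusion $\spt(T) \subset X$, this gives $\spt(T) = X$ and hence $\dF$-convergence to $(X, d, T)$. The main obstacle I anticipate is verifying (ii) with constants depending only on $n, H, D, v_0$: one must lift a Euclidean filling of the boundary sphere back through the bi-Lipschitz chart and interpret the result as a bona fide integral current in $M_i$ with controlled mass. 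Once that is handled, the remainder is a direct transplantation of the Sormani--Wenger framework, with Anderson's $\epsilon$-regularity playing the role previously played by Bishop--Gromov.
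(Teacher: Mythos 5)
Your key step is not available under the stated hypotheses. Anderson's estimate does \emph{not} give a uniform positive harmonic radius at every point of every $M_i$ from $|\Ric_{M_i}|\le (n-1)H$, $\diam(M_i)\le D$, $\Vol(M_i)\ge v_0$ alone: those hypotheses only give, via relative volume comparison, a non-collapsing bound $\Vol(B_r(p))\ge c(n,H,D,v_0)\,r^n$, whereas Anderson's $\epsilon$-regularity requires the volume ratio to be \emph{almost Euclidean}, $\Vol(B_r(p))\ge (1-\epsilon(n,H))\,\omega_n r^n$, before it produces a harmonic chart. These are genuinely different conditions: non-collapsed sequences of Einstein $4$-manifolds with bounded diameter satisfy your hypotheses yet degenerate to orbifolds, and at points approaching the orbifold singularities the harmonic radius tends to zero, so no uniform $L(n,H,D,v_0)$-bi-Lipschitz chart exists at every point. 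Hence your estimate (ii) cannot be transplanted from Euclidean space this way; and in any case the inequality you state points the wrong way for non-cancellation. An upper bound $\FillVol(\partial B_r(p))\le C r^{n+1}$ is essentially automatic (fill the sphere with the ball itself, whose volume is $\le C(n,H)r^n$) and does nothing to prevent the spheres $S^{M_i}_r(p_i)$ from converging to the zero current; what is needed is a \emph{lower} bound on the filling volume, or equivalently a uniform positive lower bound on $\dF\bigl(S^{M_i}_r(p_i),\0\bigr)$.

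The paper's proof shows how to repair exactly this point, and you would need some version of it: work at \emph{regular} points of the Cheeger--Colding limit $X_\infty$, where Colding's volume convergence supplies the almost-Euclidean volume of small rescaled balls that Anderson's theorem actually requires; this yields $C^{1,\alpha}$ convergence of the rescaled balls and of their boundary spheres to their Euclidean counterparts, the Lakzian--Sormani estimate (Proposition \ref{lemma-LakzianSormani}) converts that into a flat-distance bound between the rescaled distance spheres and Euclidean spheres, and Sormani's Lemma \ref{SormaniPreprintLemma} turns the resulting uniform positive lower bound on $\dF\bigl(S^{M_i}_r(p_i),\0\bigr)$ into $p_\infty\in\overline{Y_\infty}$. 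Points over the singular set are then handled not by any chart but by density of the regular set (Theorem \ref{S meas}) together with closedness of $\overline{Y_\infty}$. If you wish to keep your harmonic-radius framing, you must restrict it to points $p_i$ converging to regular points of $X_\infty$ (where the almost-Euclidean volume hypothesis can be verified) and supply a separate argument for singular limit points; as written, the uniform-chart claim is where the proposal breaks down.
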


In \cite{LakzianSormani} it is conjectured that the bound on the Ricci curvature in Theorem \ref{SWThm} holds more generally, assuming only a uniform lower bound on the Ricci curvature. At the current time, it is not clear to us how to remove the upper bound on $\Ric$ in Theorem \ref{MainThm}.

The original argument of Sormani-Wenger relies on bounding from below the absolute filling radius of slices of distance spheres in the integral current space (see Section 4 of \cite{SormaniWenger2010}). The notion of filling radius was introduced by Gromov in \cite{Gromov} to obtain global volume estimates in the class of manifolds with the same local contractibility functions. Recall, that a lower bound on Ricci curvature naturally imposes a geometric contractibility function on a smooth Riemannian manifold \cite{Perelman}. In \cite{SormaniWenger2011}, Sormani-Wenger generalize to integral current spaces an earlier result of Greene-Petersen \cite{GP} for Riemannian manifolds where they prove that the lower bound on the  filling radius on such slices is controlled by a local geometric contractibility function of the manifold. 

Our approach avoids any use of the filling radius or the filling volume of Gromov. Instead, we show how the Gromov-Hausdorff convergence, paired with volume convergence at regular points in the limit, controls the size (measured in the intrinsic flat sense) of distance spheres centered at these points in the limit. 

The remainder of the paper is arranged as follows: In Section \ref{background} we review the necessary background, addressing Gromov-Hausdorff and intrinsic flat distance and the relationship between the two notions of convergence these distances induce. In Section \ref{section-proof} we give a proof of our Main Theorem \ref{MainThm}. 

{\bf Acknowledgements.} We wish to thank Christina Sormani and Xiaochun Rong for helpful comments and discussion. We also thank Aaron Naber for noticing a crucial error in a previous version of this paper.

\section{Background}
\label{background}
In this section we recall various notions of convergence for Riemannian manifolds and the relevant background material and notation we will need going forward.

Let $(M^n, g)$ be a complete oriented Riemannian manifold. For $p\in M$ and $r >0$, denote by $B^M_r(p)$ the geodesic ball of radius $r$ centered at $p$ and let $S^M_r(p) = \partial B^M_r(p)$. Similarly, denote by $B^{\RR^n}_r(0)$ the ball of radius $r$ in $\RR^n$ centered at the origin. Let  $g_{\RR^n}$ denote the standard Euclidean metric on $\mathbb{R}^n$ and $\omega_n$ to denote the volume of the unit ball $B^{\RR^n}_0(1) \subset \mathbb{R}^n$. We use $\Vol_{M}$ to denote the Riemannian volume on $(M,g)$ and $\mathcal{H}^k$ to denote the $k$-dimensional Hausdorff measure.

\subsection{Gromov-Hausdorff Distance.} 
\label{subsection-GH}
The Gromov-Hausdorff distance, introduced in 1981 in \cite{Gromov}, is an extension of the Hausdorff distance between subsets of a single metric space. Let $(X,d)$ be a metric space and $A,B \subset X$, the Hausdorff distance is given by
\[
d_H\left(A, B\right) = \inf\{r ~:~ A \subset T(B,r) \text{ and } B \subset T(A,r)\},
\]
where $T(B,r) = \{x \in X ~:~ d(x,B) < r\}$ and $T(A,r)$ is defined similarly.
The Gromov-Hausdroff distance measures the infimum of the Hausdorff distance between all possible isometric embeddings
of these spaces into a common metric space. 
\begin{definition}
Given a pair of metric spaces $(X,d_X)$ and $(Y,d_Y)$ the {\em Gromov-Hausdorff distance} between them is
\[
\dGH(X,Y) = \inf_{\substack{\varphi_X:X \to Z\\\varphi_Y:Y\to Z}} \{d_H^Z(\varphi_X(X), \varphi_Y(Y))\},
\]
where the infimum is taken over all common metric spaces $Z$ and all isometric embeddings $\varphi_X:X \to Z$ and $\varphi_Y:Y \to Z$.
\end{definition}
It follows that if $X$ and $Y$ are two compact metric spaces with $\dGH(X,Y)=0$ then $X$ and $Y$ are isometric. In this way, by considering equivalence classes of isometric spaces, $\dGH$ defines a metric on the collection of all compact metric spaces. If a metric space $X$ is precompact, it is necessary to take the metric completion $\overline{X}$ before computing the Gromov-Hausdorff distance.
\begin{definition}
Given a precompact metric space $X$, the \textit{metric completion $\overline{X}$ of $X$} is the space of Cauchy sequences $\{x_j\}$ in $X$ with the metric
\[
d\left(\{x_j\}, \{y_j\} \right) = \lim_{j \to \infty} d_X(x_j, y_j)
\]
and where two Cauchy sequence are identified if the distance between them is 0. 
\end{definition}

The Gromov-Hausdorff distance can also be defined for noncompact metric spaces. In this case, one considers the {\em pointed Gromov-Hausdorff} distance 
\[
\dGH\left((X,d_X, x), (Y,d_Y,y) \right) = \inf\{d_H^Z(\varphi_X(X), \varphi_Y(Y)) + d_Z(\varphi_X(x),\varphi_Y(y))\},
\]
again taking the infimum over all isometric embeddings into $(Z,d_Z)$. A sequence of locally compact metric spaces $(X_i, d_{X_i}, x_i)$ converges in the pointed Gromov-Hausdorff sense to $(X,d_X,x)$ if for all $R>0$,
\[
\left(\overline{B_{x_i}(R)}\subset X_i , d_{X_i}\right) \xrightarrow{~~\dGH~~} \left(\overline{B_{x}(R)}\subset X , d_{X}\right).
\]
Lastly, we mention {\em measured Gromov-Hausdorff} convergence. Given a sequence of metric spaces $(X_i, d_{X_i})$ equipped with Borel probability measures $\nu_{X_i}$, we say the sequence converges in the measured Gromov-Hausdorff sense, denoted 
\[
\left(X_i, d_{X_i}, \nu_{X_i}\right) \xrightarrow{~~m\dGH~~} \left(X, d_{X}, \nu_{X}\right),
\]
provided $(X_i, d_{X_i}) \xrightarrow{~~\dGH~~} (X,d_X)$ and there is a sequence of Borel maps $f_i: X_i \to X$ such that $\displaystyle{\lim_{i\to \infty} \left({f_i}\right)_*\nu_{X_i} = \nu_X}$ in the weak-$\ast$ topology.

\subsection{Regular Sets and Renormalized Limit Measures} In \cite{ChCoI}, Cheeger-Colding initiate a detailed study of the structure of the limit spaces which arise as the pointed Gromov-Hausdorff limits of sequences of complete Riemannian manifold with Ricci curvature bounded below. Let $(X,d_X, x)$ be the pointed Gromov-Hausdroff limit of a sequence of pointed Riemannian manifolds $\{(M^n_i, p_i)\}$ with $\Ric_{M^n_i} \geq -(n-1)H$. 

The natural measure associated to elements in the sequence is the Riemannian volume measure $\Vol_{M_i}$. By taking renormalizations of these measures along a suitable subsequence, it is possible to equip the limit space $(X,d_X)$ with a limit measure $\nu$ (see also \cite{Fukaya}) so that 
\[
(M^n_i, d_{g_i}, \Vol_{M_i}) \xrightarrow{~~m\dGH~~} \left(X, d_{X}, \nu_{X}\right).
\]
Given a ball $B_x(r) \subset X$, take (along an appropriate subsequence) 
\[
\nu(B_x(r))  = \lim_{i \to \infty} \frac{1}{\Vol_{M_i}(B_{p_i}(1))} \Vol_{M_i}(B_r(p_i)).
\]

In this paper, we are concerned with sequences which have volume non-collapsed. In this situation, the limit measure exists without the necessity of passing to a subsequence and any such limit measure $\nu$ defined this way is a multiple of the $n$-dimensional Hausdorff measure $\mathcal{H}^n$ on $X$. 

A tangent cone at $x \in X$ is obtained by `blowing up' the metric $d_X$ near $x$. More precisely, a {\em tangent cone at $x \in X$} is a complete pointed Gromov-Hausdorff limit $(X_x, d_{\infty}, x_{\infty})$ of a sequence of rescaled spaces $(X, \epsilon_i^{-1} d_X, x)$ where the $\{\epsilon_i\}$ is a positive sequence with $\epsilon_i \to 0$ and $d_{\infty}$ denotes the distance metric of the resulting limit space $X_x$. By Gromov's compactness theorem, tangent cones exist for all $x\in X$ but might depend on the choice of the convergent (sub)sequence. 

\begin{definition}(c.f. \cite[Definition 0.1]{ChCoI})  A point $x \in X$ is called {\em regular} if, for some $k$, every tangent cone is isometric to $\mathbb{R}^k$. Denote the set of $k$-regular points by $\mathcal{R}^k$ and set $\mathcal{R} = \cup_k \mathcal{R}^k$. Call $\mathcal{R}$ the {\em regular set}. 

A point is called a {\em singular point} if it is not regular. Denote by $\mathcal{S}$ the singular set of $Y$.
\end{definition}

\begin{theorem}
\label{S meas}
{\em (c.f. \cite[Theorem 2.1]{ChCoI})}
For any renormalized limit measure, $\nu(\mathcal{S}) = 0$. Thus, $\nu(\mathcal{R})$ is dense in $X$.
\end{theorem}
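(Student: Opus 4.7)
The plan is to apply the Bishop-Gromov volume comparison together with the almost-rigidity machinery of Cheeger-Colding in sequence. Under the assumption $\Ric_{M_i} \geq -(n-1)H$, Bishop-Gromov tells us that $r \mapsto \Vol_{M_i}(B_r(p_i))/V_H(r)$ is monotone non-increasing, where $V_H(r)$ denotes the volume of an $r$-ball in the $n$-dimensional model space of constant curvature $-H$. Passing this monotonicity through the measured Gromov-Hausdorff limit, one obtains that for each $x \in X$ the function $r \mapsto \nu(B_r(x))/V_H(r)$ is also monotone non-increasing. In particular, the density
\[
\theta(x) \;=\; \lim_{r \to 0^+} \frac{\nu(B_r(x))}{\omega_n r^n}
\]
exists and is finite at $\nu$-almost every $x \in X$ by the monotone convergence of the rescaled ratios.

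At any point $x$ where $\theta(x)$ exists, the volume ratio stabilizes as $r \to 0$, and at this step I would invoke the ``volume cone implies metric cone'' theorem of Cheeger-Colding: if on a pair of concentric balls the normalized volume ratio is nearly constant, then the corresponding annular region is Gromov-Hausdorff close to an annulus in a metric cone. Applied on a sequence of rescalings at $x$ along any tangent cone sequence $\epsilon_i \to 0$, the near-constancy of the volume ratio at all sufficiently small scales forces every tangent cone at $x$ to be isometric to a metric cone $C(\Sigma_x)$ over some compact cross-section $\Sigma_x$ of diameter at most $\pi$.

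To upgrade ``metric cone'' to ``Euclidean space'' I would iterate the tangent cone construction. Pick a density point of $C(\Sigma_x)$ away from the cone vertex, pass to a further tangent cone, and repeat. Each iteration produces an additional line along which the almost-splitting theorem applies: in a measured Gromov-Hausdorff limit of manifolds with $\Ric \geq -(n-1)H$, the presence of a line forces an isometric $\mathbb{R}$-factor to split off. Iterating at most $n$ times one arrives at a tangent cone isometric to $\mathbb{R}^n$, and unwinding the iteration shows that $\nu$-a.e. point of $X$ has all of its tangent cones isometric to some $\mathbb{R}^k$, i.e. $\nu(\mathcal{S}) = 0$. Density of $\mathcal{R}$ in $X$ then follows from the fact that $\nu$ has full support by the two-sided Bishop-Gromov bounds, so any open subset has positive $\nu$-measure and therefore meets $\mathcal{R}$. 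The principal technical obstacle is establishing the quantitative almost-rigidity statements (volume cone implies metric cone, almost-splitting) with sharp enough error control to allow the tangent-cone iteration to proceed without losing track of the nearly-constant volume ratio at each scale.
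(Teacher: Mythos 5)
This statement is quoted in the paper from Cheeger--Colding without proof, so the only question is whether your sketch would actually establish it; it would not, because of the final ``unwinding'' step. An iterated tangent cone (a tangent cone of a tangent cone, based at a point away from the vertex) is not a tangent cone of $X$ at the original point $x$, so producing one Euclidean iterated blow-up tells you nothing about the tangent cones at $x$ itself --- and the definition of regular point used here requires that \emph{every} tangent cone at $x$ be isometric to some $\mathbb{R}^k$. A point all of whose tangent cones are metric cones $C(\Sigma_x)$ with non-Euclidean cross-section is singular, yet your construction happily iterates at such a point until it reaches $\mathbb{R}^n$; so the argument as written would ``prove'' that every point where the density exists is regular, which is false. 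The actual Cheeger--Colding proof is measure-theoretic rather than a pointwise blow-up iteration: one uses that $\nu$ is doubling (Bishop--Gromov passed to the limit), the segment inequality to show that at $\nu$-a.e.\ point and at all sufficiently small scales most nearby pairs are joined by limit geodesics passing close to the point, and the almost-splitting theorem to conclude that tangent cones at $\nu$-a.e.\ point split off a line; the induction on the number of split factors is then carried out $\nu$-a.e.\ in $X$ (via density points of suitable subsets), not by re-blowing up inside a fixed tangent cone.

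A secondary gap: the theorem is stated for \emph{any} renormalized limit measure, including collapsed limits, while your first two steps are specific to the non-collapsed case. Monotonicity of $\nu(B_r(x))/V_H(r)$ does pass to the limit, but finiteness of the $n$-dimensional density $\lim_{r\to 0}\nu(B_r(x))/(\omega_n r^n)$ fails in the collapsed case (the ratio typically diverges), so the ``volume cone implies metric cone'' step cannot even get started there; in fact in collapsed limits tangent cones need not be metric cones, and the splitting-based argument above is what saves the theorem. For the non-collapsed setting actually used in this paper your first steps are fine, but the conclusion still cannot be reached along the route you propose because of the iterated-tangent-cone issue.
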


\subsection{Intrinsic Flat Distance}
\label{subsection-IF}
The intrinsic flat distance was introduced to better understand the limits of sequences of Riemannian manifolds. Much like the Gromov-Hausdroff distance, it is also defined on a larger class of metric spaces, namely metric integral current spaces. In \cite{AK}, Ambrosio-Kirchheim extend the classical Federer-Fleming theory of integral currents in Euclidean space to integral currents in general metric spaces. Using this theory of metric integral currents, we define the intrinsic flat distance between metric integral current spaces. 

An integral current space is denoted $(X,d,T)$ where $(X,d)$ is a metric space and $T$ is an integral current structure supported on $X$. For $m \geq 0$, the space of integral $m$-currents in $X$, denoted by $\mathbf{I}_m(X)$, are functionals on generalized $m$-forms. The integral current provides both an orientation and a measure $||T||$ called the mass measure of $T$. 

Any compact, oriented Riemannian manifold $(M^n,g)$ with or without boundary can be viewed as a metric space $(M,d_g)$ possessing an integral current $T \in \mathbf{I}_n(M)$, defined as integration over $M$, i.e. for any $n$-dimensional differential form $\omega \in \Omega^n(M)$ on $M$, let 
\[
T(\omega) = \int_{M} \omega,
\]
while the mass measure is just the Riemannian volume measure on $M$ and, assuming multiplicity one, the mass is simply the volume of the manifold.

More precisely, let $(Z,d)$ be a metric space and let $\mathcal{D}^m(Z)$ be the set of $(m+1)$-tuples $(f, \pi_1, \dots, \pi_m)$ of Lipschitz functions on $Z$ with $f$ bounded. 
\begin{definition}Z
An {\em $m$-dimensional metric current $T$ on $Z$} denoted $\mathbf{M}_m(Z)$ is a multi-linear functional on $\mathcal{D}^m(Z)$ which satisfies
\begin{enumerate}[(i)]
\item If $\pi_i^j$ converges pointwise to $\pi_i$ as $j \to \infty$ and $\sup_{i,j} \Lip(\pi_i^j) < \infty$, then 
\[
T(f, \pi_1^j,\dots, \pi_m^j ) \rightarrow T(f, \pi_1, \dots \pi_m).
\]
\item If $\{z \in Z ~:~ f(x) \neq 0\}$ is contained in the union $\displaystyle{\cup_{i=1}^m B_i}$ of Borel sets $B_i$ and if $\pi_i$ is constant on $B_i$, then 
\[
T(f, \pi_1, \dots, \pi_m) =0.
\]
\item There edits a finite Borel measure $\mu$ on $Z$ such that 
\[
|T(f, \pi_1, \dots, \pi_m)| \leq \prod_{i=1}^m \Lip(\pi_i) \int_Z |f| d\mu.
\]
The minimal such Borel measure $\mu$ is called the {\em mass measure of $T$}, denoted $||T||$. 

\end{enumerate}
\end{definition}
The mass of a current $T  \in \mathbf{I}_n(Z)$ is a kind of weighted volume of the space $Z$ depending on the current structure $T$ and defined by
\[
\M(T) : = ||T||(Z) = \int_Z d||T||.
\]   
The $\spt(T)$ is the set of positive lower density for the measure $||T||$
\[
\spt(T) = \{z \in Z ~:~ ||T||(B_x(\epsilon)) >0 \text{  for all  } \epsilon >0\}.
\]
Restricting $T$ to a Borel set $A \subset Z$, define $T\rstr A$ by 
\[
(T \rstr A)(f, \pi_1, \dots, \pi_m) := T(f \chi_A, \pi_1, \dots, \pi_m).
\]
Along these same lines,  taking $m \geq 1$, the boundary of $T$ is defined by
\[
\partial T (f, \pi_1, \dots, \pi_{m-1}) := T(1, f, \pi_1, \dots, \pi_{m-1}).
\]
Furthermore, set $\partial(Z,d,T) = (\spt(\partial T), d, \partial T)$. We assume $\partial T \in \mathbf{M}_{m-1}(Z)$ so that $T$ is a normal current and set
\[\partial T(\varphi) = T(\partial \varphi).
\]
Of course, when $M$ is a Riemannian manifold, then $\partial M$ is just the usual boundary of $M$ and this agrees with Stoke's Theorem.

The {\em push-forward of $T \in \M_n(Z)$} under a Lipschitz map $\varphi$ from $Z$ to another complete metric space $Y$ is defined as 
\[
{\varphi}_\# T(f, \pi_1, \dots, \pi_m) := T(f\circ \varphi, \pi_1\circ \varphi, \dots, \pi_m\circ \varphi),
\]
for $(f, \pi_1, \dots, \pi_m) \in \mathcal{D}^n(Y)$.

As mentioned above, we are concerned with integral current spaces. For $m \geq 1$, a current $T \in \mathbf{M}_m(Z)$ with $\partial T \in \mathbf{M}_{m-1}(Z)$ is called an integral current, denoted $\mathbf{I}_m(Z)$, provided
\begin{enumerate}[(i)]
\item $||T||$ is concentrated on countably $\mathcal{H}^m$-rectifiable sets and vanishes on $\mathcal{H}^m$-negligible sets
\item For any Lipschitz map $\varphi: Z \to \mathbb{R}^m$ and any open set $U \subset Z$, there exits $\theta \in L^1(\mathbb{R}^m, \mathbb{Z})$ such that $\varphi_\#(T \rstr U) =\displaystyle{\int_{K \subset \mathbb{R}^m} \theta f \det\left(\frac{\partial \pi_i}{\partial x_j} \right) d \mathcal{L}^m}$.
\end{enumerate}

The {\em flat distance} between two integral currents $T_1, T_2 \in \I_n(Z)$ is defined as
\[
d_F\left(T_1, T_2 \right) := \inf \{\M(B^{n+1}) + \M(A^n) ~:~ T_1-T_2 = A + \partial B\}.
\]

Similar in spirit to the Gromov-Hausdorff distance,  given $T_1, T_2 \in \I_n(Z)$ define
\begin{definition}
\label{IFdefn}
{ (Sormani-Wenger \cite{SormaniWenger2011})}
The {\em intrinsic flat distance} between $(X_1, d_1, T_1)$  and $(X_2, d_2, T_2)$ is
\[
\dF \left((X_1, d_1, T_1), (X_2, d_2, T_2) \right) = \inf_{\substack{\\ \varphi_i : X_i \to Z}} \{d_F^Z({\varphi_1}_{\#}T_1, {\varphi_2}_{\#}T_2)\},
\]
where the infimum is taken over all common complete metric spaces $Z$ and all isometric embeddings $\varphi_i: X_i \to Z$.
\end{definition}

Using techniques from geometric measure theory, it is possible to determine precise estimates on the intrinsic flat distance in terms of the Lipschitz distance between the metric spaces, the diameters of the spaces, and their volumes. For our purposes, we make use of the following estimate of Lakzian-Sormani 



\begin{proposition}
\label{lemma-LakzianSormani}
{\bf (c.f. Lemma 4.5 in \cite{LakzianSormani}).}
Let $M_1= (M, g_1)$ and $M_2=(M, g_2)$ be diffeomorphic, oriented pre-compact Riemannian manifolds and suppose that there exists $\epsilon >0$ such that 
\begin{equation}
\label{g1g2bound}
g_1(v,v) < (1+\epsilon)^2 g_2(v,v) \quad \text{ and } \quad g_2(v,v) < (1+\epsilon)^2 g_1(v,v), \quad \forall v \in TM.
\end{equation}
Then for any 
\[a_1 > \frac{\arccos(1+\epsilon)^{-1}}{\pi} \diam(M_2) \qquad \text{ and } \qquad a_2 > \frac{\arccos(1+\epsilon)^{-1}}{\pi} \diam(M_1),\]
there is a pair of isometric embeddings $\varphi_i: M_i \to \overline{M} \times [t_1, t_2]$, for $i=1,2$, which can be used to bound $\GH$-distance and $\mathcal{F}$-distances. Namely,
\be
\dGH(\overline{M_1}, \overline{M_2}) \leq \max(a_1, a_2), \qquad  \text{ and }
\ee
\be \label{LS bound}
\dF(M_1', M_2') \leq \max(a_1, a_2) \left(\Vol^n(M_1) + \Vol^n(M_2) + \Vol^{n-1}(\partial M_1) + \Vol^{n-1}(\partial M_2) \right).
\ee
\end{proposition}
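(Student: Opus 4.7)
The plan is to construct a single ambient Riemannian space into which both $M_1$ and $M_2$ embed isometrically, then read off the $\dGH$ and $\dF$ bounds from the geometry of the filling between the two copies. I would take $Z = \overline{M} \times [t_1, t_2]$, where $\overline{M}$ is the common underlying smooth manifold, equipped with a warped product metric of the form $\bar g = dt^2 + h(t)^2 g_0$ for a suitably chosen reference metric $g_0$ on $\overline{M}$ and warping function $h(t)$, arranged so that the restriction of $\bar g$ to the slice $\overline{M} \times \{t_i\}$ recovers $g_i$. Then define the embeddings $\varphi_i : M_i \to Z$ by $\varphi_i(x) = (x, t_i)$.

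The bi-Lipschitz bound (\ref{g1g2bound}) is precisely what allows $h$ to be realized as a piece of a cosine function interpolating between the two conformal factors, and the threshold angle $\arccos((1+\epsilon)^{-1})$ emerges as the largest angular range over which such a cosine stays within the prescribed bi-Lipschitz window. The main obstacle is verifying that each $\varphi_i$ is genuinely an isometric embedding, i.e.\ that no ambient geodesic between two points in $\varphi_i(M_i)$ is strictly shorter than the intrinsic distance in $(M, g_i)$. This reduces to a one-variable variational argument comparing a purely horizontal path in the slice $t = t_i$ to a competitor that dips to a different height $t$: the vertical excursion of amplitude $s$ contributes at least $2s$ to the length while the horizontal contribution is scaled by the factor $h$, and the hypothesis $a_i > \arccos((1+\epsilon)^{-1})\diam(M_j)/\pi$ is exactly the condition that forbids the horizontal savings from outweighing the vertical cost across the full diameter.

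Once the embeddings are known to be isometric, the Gromov-Hausdorff inequality is immediate, since every point of $\varphi_1(M_1)$ lies within vertical distance $t_2-t_1 \leq \max(a_1, a_2)$ of $\varphi_2(M_2)$ in $Z$. For the intrinsic flat estimate, I would take $B = \llbracket \overline{M} \times [t_1, t_2] \rrbracket$, integration over the warped cylinder, and $A = \pm \llbracket \partial \overline{M} \times [t_1, t_2] \rrbracket$, integration over the lateral boundary. With signs oriented appropriately, the top and bottom faces of $\partial B$ contribute $\varphi_{2\#} T_2 - \varphi_{1\#} T_1$, so that $\varphi_{1\#}T_1 - \varphi_{2\#}T_2 = A + \partial B$ is an admissible decomposition in the definition of the flat distance. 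Controlling $\M(B) \leq (t_2 - t_1) \sup_t h(t)^n \Vol_{g_0}(\overline{M})$ and $\M(A) \leq (t_2 - t_1) \sup_t h(t)^{n-1} \Vol_{g_0}(\partial \overline{M})$, and then using (\ref{g1g2bound}) to replace the $g_0$-volumes by the given $\Vol(M_i)$ and $\Vol(\partial M_i)$ up to the $(1+\epsilon)$ distortion already absorbed into $\max(a_1, a_2)$, produces (\ref{LS bound}).
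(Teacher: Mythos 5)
Your overall skeleton is the right one, and it matches the Lakzian--Sormani construction that this Proposition is quoted from: embed both manifolds as the top and bottom slices of $\overline{M}\times[t_1,t_2]$, get the $\dGH$ bound from the vertical separation, and get the $\dF$ bound by taking $B$ to be integration over the whole product region and $A$ the lateral boundary $\partial M\times[t_1,t_2]$. The problem is the ambient metric you propose. A single warped product $\bar g = dt^2 + h(t)^2 g_0$ with one reference metric $g_0$ has slice metrics $h(t_1)^2 g_0$ and $h(t_2)^2 g_0$, which are constant multiples of one another; so it can restrict to $g_1$ at $t_1$ and to $g_2$ at $t_2$ only if $g_2$ is a constant conformal rescaling of $g_1$. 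The hypothesis (\ref{g1g2bound}) gives only bi-Lipschitz closeness, so your construction fails at the very first step, and the phrase ``arranged so that the restriction of $\bar g$ to the slice recovers $g_i$'' conceals exactly the hard part. The actual interpolation uses \emph{both} metrics simultaneously, e.g.\ a metric of the form $g' = dt^2 + f_1(t)^2\, g_1 + f_2(t)^2\, g_2$ with cosine-type warping factors $f_1,f_2$ chosen so that the $g_2$-term vanishes at $t_1$ and the $g_1$-term vanishes at $t_2$. The bound (\ref{g1g2bound}) then shows each interior slice metric dominates $\cos^2(\cdot)\,g_i$ for either $i$, and the proof that the end slices are \emph{distance-preservingly} embedded is a hemisphere comparison (the borderline case being the totally geodesic equator of a round hemisphere); this is precisely where the threshold $\arccos\left((1+\epsilon)^{-1}\right)$ and, importantly, the diameter of the \emph{other} manifold ($\diam(M_2)$ in the condition on $a_1$) enter. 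Your one-variable ``vertical cost versus horizontal savings'' sketch gestures at this but is carried out for the wrong metric and never actually establishes that ambient geodesics between slice points cannot undercut the intrinsic distance.

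The mass estimates also need repair. With the correct doubly-warped metric, bounding $\M(B)$ by $\max(a_1,a_2)\left(\Vol^n(M_1)+\Vol^n(M_2)\right)$ and $\M(A)$ by $\max(a_1,a_2)\left(\Vol^{n-1}(\partial M_1)+\Vol^{n-1}(\partial M_2)\right)$ requires controlling the volume forms of the interpolating slices by those of the two end metrics, using the choice of $f_1,f_2$ together with (\ref{g1g2bound}); your claim that the $(1+\epsilon)$ distortion is ``already absorbed into $\max(a_1,a_2)$'' is unjustified, since $a_1,a_2$ may be taken arbitrarily close to their infimal values and the right-hand side of (\ref{LS bound}) carries no $(1+\epsilon)$ slack. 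Finally, note that the present paper does not prove this Proposition at all --- it is imported verbatim from Lakzian--Sormani --- so the comparison above is with that source's argument rather than with anything in this manuscript.
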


Here $M_i'$ denotes the settled completion of $M_i$. In general, if $(M,g)$ is a Riemannian manifold with singularities on a set $S$ of $\mathcal{H}^n$-measure zero, then the corresponding integral current space is given by the settled completion of $M\setminus S$. 
\begin{definition}
Let $(X,d)$ be a metric space with a measure $\mu$. The {\em settled completion} $X'$ is the collection of points $p $ in the metric completion $\overline{X}$ with have positive lower density
\[
\liminf _{r\to 0} \frac{\mu(B_p(r))}{r^n} > 0.
\]
\end{definition}

\subsection{Relationship between Intrinsic Flat and Gromov-Hausdorff convergence}
There are different ways to measure the convergence of manifolds or metric spaces. The Gromov-Hausdorff and intrinsic flat distance give two notions of convergence for oriented Riemannian manifolds while the Gromov-Hausdorff distance determines a very weak form of convergence. Generally speaking, their limits will not be smooth manifolds themselves and it is important to understand when these two weak forms of convergence agree in their limit. In the simple example below, we see that this is not necessarily always the case (c.f. Examples 6.1-6.3 in \cite{SormaniWenger2011}).
\begin{example}
\label{Main Ex}
Consider the sphere $\mathbb{S}^n$ with a small rounded cylindrical neck attached. As the cylindrical neck collapses, although the diameter and volume remain bounded,  the (negative) Ricci curvature, where the neck joins the sphere, becomes increasingly more negative along the sequence.

In the limit, the Gromov-Hausdorff limit $X_{\infty}$ consists of the sphere $\mathbb{S}^n$ with a straight line segment attached as the remnant of the cylindrical neck. However, in the  intrinsic flat convergence, the orientations of the rounded cylinder cancel each other out in the limit. Thus, the intrinsic flat limit $Y_{\infty}$ is just the sphere $\mathbb{S}^n$. See the Figure 1 below. Here $Y_{\infty} \subsetneq X_{\infty}$. Thus, the uniform lower bound on Ricci in Theorem \ref{MainThm} is necessary.

\begin{figure}[h!]
\label{fig}
\centering
\includegraphics[width=6in, height=2.5in]{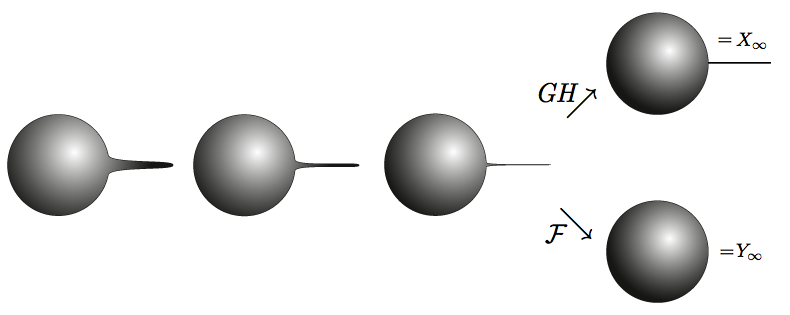}\\
  \caption{Gromov-Hausdorff limit vs Intrinsic Flat limit.}
  \end{figure}

\end{example}

In recent work \cite{Sormani-preprint}, Sormani continues this study of intrinsic flat limits proving two Arzela-Ascoli theorems. One of the essential ingredients of her arguments is the following Lemma which will also be useful for our purposes. Letting $\0$ denote the trivial flat  integral current space $(0,0,0)$, we have 

\begin{lemma}
\label{SormaniPreprintLemma}
{\em \bf (c.f. Lemma 4.1 in \cite{Sormani-preprint}).}
Suppose $M_j \xrightarrow{\F} M_{\infty} = (Y_{\infty}, d_{\infty}, T_{\infty})$ and $p_j \in M_j$ a Cauchy sequence which has no limit in $\overline{Y}_{\infty}$, then 
\begin{align*}
\tag{$\ast$} \label{Fto0condition}
&\text{there exists } \delta >0 \text{ such that, for almost every } r \in (0, \delta), \\
&S^{M_j}_{r}(p_j) \text{ are integral} \text{ current spaces for } j = \{1, 2, \dots\} \text{ and }S^{M_j}_{r}(p_j) \xrightarrow{\F} \0.
\end{align*}
\end{lemma}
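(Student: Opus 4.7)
The plan is to work in a common ambient metric space and exploit the fact that the asymptotic location of $p_j$ lies off the support of the intrinsic flat limit; slicing the witnessing flat decomposition by the distance function to $p_j$ should then realize each sphere $S^{M_j}_r(p_j)$ as the boundary of a chain of vanishingly small mass.

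First I would use the definition of intrinsic flat convergence together with the embedding machinery of Sormani--Wenger to isometrically embed all $M_j$ and $Y_\infty$ into a single complete metric space $Z$, via maps $\varphi_j$ and $\varphi_\infty$, so that $\dF^Z(\varphi_{j\#}T_j,\varphi_{\infty\#}T_\infty)\to 0$ and the Cauchy sequence $\varphi_j(p_j)$ converges to some $p_\infty \in Z$. The hypothesis that $\{p_j\}$ has no limit in $\overline{Y}_\infty$ translates into $p_\infty \notin \varphi_\infty(\overline{Y}_\infty)$, so there exists $\delta>0$ with $d^Z(p_\infty,\varphi_\infty(\overline{Y}_\infty))>2\delta$; for $j$ sufficiently large the open ball $U_j(r):=B^Z_r(\varphi_j(p_j))$ is then disjoint from $\varphi_\infty(\overline{Y}_\infty)$ for every $r\in(0,\delta)$.

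Next, for each $j$ I would choose near-optimal $A_j \in \mathbf{I}_n(Z)$ and $B_j \in \mathbf{I}_{n+1}(Z)$ realizing $\varphi_{j\#}T_j - \varphi_{\infty\#}T_\infty = A_j + \partial B_j$ with $\mathbf{M}(A_j)+\mathbf{M}(B_j)\to 0$. Restricting to $U_j(r)$ annihilates the $\varphi_{\infty\#}T_\infty$ contribution, and the Ambrosio--Kirchheim slicing identity $(\partial B_j)\rstr U_j(r) = \partial(B_j\rstr U_j(r)) - \langle B_j, f_j, r\rangle$, with $f_j(\cdot)=d^Z(\cdot,\varphi_j(p_j))$ the $1$-Lipschitz distance function, lets me take the boundary of the restricted identity. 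Using $\partial^2=0$, $\partial T_j=0$ (closed manifolds), and the compatibility of slicing with isometric push-forward (so that $\langle\varphi_{j\#}T_j,f_j,r\rangle$ is the $\varphi_j$-image of the integral current structure on $S^{M_j}_r(p_j)$), I arrive at the key identity
\[
\langle \varphi_{j\#}T_j, f_j, r\rangle \;=\; \partial\bigl(A_j\rstr U_j(r) - \langle B_j, f_j, r\rangle\bigr),
\]
and hence the bound
\[
\dF\bigl(S^{M_j}_r(p_j),\0\bigr) \;\leq\; \mathbf{M}(A_j) + \mathbf{M}\bigl(\langle B_j, f_j, r\rangle\bigr).
\]

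It remains to send $j\to\infty$ for almost every $r\in(0,\delta)$. The first term vanishes by hypothesis. For the second, the standard slicing mass inequality gives $\int_0^\delta \mathbf{M}(\langle B_j,f_j,r\rangle)\,dr \leq \mathbf{M}(B_j)\to 0$, so these nonnegative functions tend to zero in $L^1(0,\delta)$. Passing to a subsequence (or equivalently choosing the decompositions so that $\sum_j \mathbf{M}(B_j)<\infty$ and invoking Borel--Cantelli) upgrades this to pointwise convergence to $0$ for a.e.\ $r$, which combined with $\mathbf{M}(A_j)\to 0$ yields $\dF(S^{M_j}_r(p_j),\0)\to 0$ for a.e.\ $r\in(0,\delta)$; almost-everywhere smoothness of the distance function on each $M_j$ ensures simultaneously that $S^{M_j}_r(p_j)$ is a genuine integral current space for a.e.\ $r$ and every $j$. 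The main obstacles I expect are (i) executing the Ambrosio--Kirchheim slicing calculus in the abstract metric-current setting and checking its compatibility with the isometric push-forwards $\varphi_{j\#}$, and (ii) upgrading the $L^1$-decay of $\mathbf{M}(\langle B_j,f_j,r\rangle)$ to pointwise a.e.\ convergence, which is cleanest via subsequence extraction.
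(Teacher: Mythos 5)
This lemma is not proved in the paper at all: it is quoted verbatim from Sormani's preprint, so there is no in-paper argument to compare against. Your proposal essentially reconstructs the intended (cited) proof: realize the flat convergence by isometric embeddings into one complete space $Z$, use that the limit $p_\infty$ of $\varphi_j(p_j)$ has distance $2\delta>0$ from the closed set $\varphi_\infty(\overline{Y}_\infty)\supset\spt(\varphi_{\infty\#}T_\infty)$ so that restricting to $U_j(r)=B^Z_r(\varphi_j(p_j))$, $r<\delta$, kills the $T_\infty$ term, and then apply Ambrosio--Kirchheim slicing to a near-optimal decomposition $\varphi_{j\#}T_j-\varphi_{\infty\#}T_\infty=A_j+\partial B_j$ to get, for a.e.\ $r$, the identity $\langle\varphi_{j\#}T_j,f_j,r\rangle=\partial\bigl(A_j\rstr U_j(r)-\langle B_j,f_j,r\rangle\bigr)$ and hence $\dF(S^{M_j}_r(p_j),\0)\le\M(A_j)+\M(\langle B_j,f_j,r\rangle)$, with $\int_0^\delta\M(\langle B_j,f_j,r\rangle)\,dr\le\M(B_j)\to0$. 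All of this is sound (granting $\partial T_j=0$, which is the situation in this paper; in general one must also argue $(\partial T_j)\rstr U_j(r)=0$ for small $r$, or define the sphere as the slice), and the a.e.\ integrality of the slices for all $j$ simultaneously is fine since there are countably many $j$.

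The genuine gap is in the last step. The parenthetical ``or equivalently choosing the decompositions so that $\sum_j\M(B_j)<\infty$'' is not available: near-optimal decompositions only give $\M(A_j)+\M(B_j)\le 2\,d_F^Z(\varphi_{j\#}T_j,\varphi_{\infty\#}T_\infty)\to0$, and a null sequence need not be summable; you cannot force $\M(B_j)$ summable while keeping $\M(A_j)\to0$ (taking $B_j=0$ destroys the latter). Moreover $L^1(0,\delta)$ decay of the nonnegative functions $h_j(r)=\M(\langle B_j,f_j,r\rangle)$ does not give $h_j(r)\to0$ for a.e.\ $r$ along the full sequence (typewriter-type examples), only $\liminf_j h_j(r)=0$ a.e.\ by Fatou. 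So what your argument actually proves is the lemma along a subsequence (chosen with summable flat distances), or the a.e.\ statement with $\liminf$ in place of the limit -- strictly weaker than the statement as quoted. This weakened form is all that the present paper needs, since the lemma is only invoked to produce a contradiction from a uniform positive lower bound $\dF(S^{M_{i_j}}_r(p_{i_j}),\0)>\ebar$ holding for all large $j$ and an interval of radii; but if you want the full-sequence conclusion as stated, you must either cite it (as the paper does) or supply an additional argument beyond the $L^1$ bound.
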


This Lemma provides a powerful tool when trying to prove that intrinsic flat limits are not the zero integral current space. With the goal of excluding this behavior of points being `lost' in the intrinsic flat limit, note that if one can find, for $r>0$, a non-flat comparison integral current space $N$ (possibly depending on $r$), such that for all $j$ sufficiently large
\[
\dF(S^{M_j}_r(p_j), N) < \frac{\dF(N, \0)}{2};
\]
then, it follows by the triangle inequality  that
\[
\dF(S^{M_j}_r(p_j), \0) > \frac{\dF(N, \0)}{2} >0,
\]
for all $j$ sufficiently large as well. 
In fact, this observation will be crucial in our argument in the proof of Theorem \ref{MainThm}. See also \cite{SormaniWenger2010, LakzianSormani, LeeSormani} for other means  of estimating the intrinsic flat distance.\\

In the argument to follow, we will often consider metric spaces whose distance has been rescaled. With this in mind, let us set some notation. Consider a metric space $(X,d)$ and $R>0$ a real number. We will denote by $\frac{X}{R}$ the rescaled metric space $(X,\frac{1}{R}d)$. Of course, any Riemannian manifold $(M^n,g)$ can be given an integral current structure induced from the Riemannian distance $d_g$ and orientation form $d\Vol_g$, respectively. Viewed in this way we have the natural relation $(M, \frac{1}{R}d_g) = (M, \frac{1}{R^2} g)$ and we use $M^n/R$ to refer to them both.  

We conclude this section by noting 

\begin{lemma}
\label{scalingLemma} 
{\em \bf (c.f. Lemma 4.4 in \cite{Sormani-preprint}).}
Let $M_i = (X_i, d_i, T_i)$ and let $R>0$. Consider the rescaled integral current spaces given by $M_i' = (X_i, d_i/R, T_i)$. Then,
\[\dF (M_1, M_2) \leq \dF (M_1', M_2') R^n(1+R)\]
\end{lemma}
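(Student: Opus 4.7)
The plan is to start from the defining infimum for $\dF(M_1', M_2')$ and track how each ingredient transforms when we rescale the ambient metric by a factor of $R$. Fix $\varepsilon>0$ and choose a complete metric space $(Z', d_{Z'})$ together with isometric embeddings $\varphi_i' : (X_i, d_i/R) \to Z'$ along with an $n$-current $A'$ and an $(n+1)$-current $B'$ in $Z'$ satisfying
\[
\varphi'_{1\,\#}T_1 - \varphi'_{2\,\#}T_2 \;=\; A' + \partial B', \qquad \M(A') + \M(B') < \dF(M_1', M_2') + \varepsilon.
\]

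Next, define $(Z, d_Z) := (Z', R\, d_{Z'})$, i.e.\ the same underlying set with distances multiplied by $R$, and take $\varphi_i := \varphi_i'$ as set-maps. Because $d_{Z}(\varphi_i(x), \varphi_i(y)) = R\, d_{Z'}(\varphi_i'(x), \varphi_i'(y)) = R \cdot d_i(x,y)/R = d_i(x,y)$, the $\varphi_i$ are genuine isometric embeddings of $(X_i, d_i)$ into $Z$. The key computation is the behaviour of mass under a global rescaling of the ambient metric: if $S$ is an $m$-current on $Z'$, viewed as the same functional on $(m+1)$-tuples $(f,\pi_1,\dots,\pi_m)$ of Lipschitz functions, then each Lipschitz constant $\Lip_{Z}(\pi_j) = \tfrac{1}{R}\Lip_{Z'}(\pi_j)$ inversely rescales, and consequently the minimal controlling measure in axiom (iii) scales by $R^m$; hence $\M_Z(S) = R^m \M_{Z'}(S)$. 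Push-forward and boundary commute with this identification, so setting $A := A'$ and $B := B'$ viewed as currents in $Z$ still yields the decomposition
\[
\varphi_{1\,\#}T_1 - \varphi_{2\,\#}T_2 \;=\; A + \partial B
\]
in $Z$.

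Putting it together, by definition of the flat distance in $Z$,
\[
\dF(M_1, M_2) \;\le\; \M_Z(A) + \M_Z(B) \;=\; R^n \M_{Z'}(A') + R^{n+1} \M_{Z'}(B') \;\le\; R^n(1+R)\bigl(\M(A') + \M(B')\bigr),
\]
since $R^n \le R^n(1+R)$ and $R^{n+1} \le R^n(1+R)$. Applying the bound on $\M(A')+\M(B')$ and letting $\varepsilon \to 0$ gives the claim. The only subtle step is the rescaling rule $\M_Z(S) = R^m \M_{Z'}(S)$ for $m$-currents, which is the main thing to check carefully; everything else is unwinding definitions and using that isometric embeddings pull back to isometric embeddings after the global rescaling.
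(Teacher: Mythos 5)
Your proof is correct and follows essentially the same route as the paper: rescale the ambient space $Z'$ by $R$, observe the embeddings become isometric for $(X_i,d_i)$, and use that the mass of an $m$-current scales by $R^m$ (your Lipschitz-constant argument for this is exactly the right justification). The only difference is that you work with an $\varepsilon$-near-optimal decomposition and let $\varepsilon\to 0$, whereas the paper invokes the existence of an optimal space $(Z,d_Z)$ from Theorem 4.23 of \cite{SormaniWenger2011}; your variant is marginally more self-contained but not a genuinely different argument.
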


The idea of the proof relies on the fact (c.f. \cite[Theorem 4.23]{SormaniWenger2011}) that the infimum of Definition \ref{IFdefn} is realized by {\em some} metric space $(Z,d_Z)$ and isometric embeddings $\varphi_i: X_i \to Z$. By rescaling this optimal metric space we can achieve a bound on the intrinsic flat distance for scaled currents.\\

\noindent {\em Proof of Lemma \ref{scalingLemma}.}
Consider $M_i' = (X_i, d_i/R, T_i)$. By Theorem 4.23 in \cite{SormaniWenger2011} there exists some metric space $(Z,d_Z)$ and isometric embeddings which achieve the infimum in the definition of $d_F^Z({\varphi_1}_{\#}T_1, {\varphi_2}_{\#}T_2)$ in the definition of $\dF(M_1', M_2')$. It follows that, for any $x,y \in X_i$,
\[
d_Z(\varphi_i(x), \varphi_i(y)) = \frac{1}{R}d_i(x,y).
\]
Therefore, the metric space  $(Z,Rd_Z)$ is a contender for the infimum in  $\dF(M_1, M_2)$,  since the maps $\varphi_i : (X_i, d_i) \to (Z, Rd_Z)$ are isometric embeddings of $(X_i, d_i)$, by construction. 

Noting how the mass measure $||T||$ and its associated mass $\M$ scale with respect to the metrics $d_Z$ and $R\cdot d_Z$ and taking $\varphi_i : (X_i, d_i) \to (Z, Rd_Z)$ as above, we get (denoting the scaled mass by $\M_R$) 

\begin{align*}
\dF(M_1, M_2) &\leq d_F^Z\left({\varphi_1}_\# T_1, {\varphi_2}_\#T_2 \right)\\
			&\leq \M_R(B^{n+1}) + \M_R(A^n)\\
			& = R^{n+1} \M(B^{n+1}) + R^n \M(A^n)\\
			&\leq \left(\M(B^{n+1}) + \M(A^n)\right)\left(R^{n+1} + R^n \right)\\
			&= \dF (M_1', M_2') R^n(1+R).
\end{align*}
\hfill \qed\\



\section{Proof of Theorem \ref{MainThm}.}
\label{section-proof}
Let $\{(M^n_i, g_i)\}_{i=1}^{\infty}$ be a sequence of closed oriented Riemannian manifolds satisfying
\be\label{conditions}
|\Ric_{M_i}| \leq (n-1)H,  \quad \diam(M_i) \leq D, \quad  \Vol(M_i) \geq v_0>0.
\ee
It follows from Gromov's Precompactness Theorem, that a subsequence $\{M^n_i\}$ converges in the Gromov-Hausdoff sense to a compact metric space $(X_{\infty},d_{{\infty}})$. Recalling Section \ref{subsection-GH}, this means that there exists some compact metric space $Z$ with $X_{\infty} \subset Z$ and a sequence of isometric embeddings $\varphi_i: M^n_i \to Z$ such that  $d^Z_H(\varphi_i(M^n_i), X_{\infty}) \to 0$ as $i \to \infty$.

In the same way, we can view each $(M^n_i, g_i)$ as an integral current space. Denote by $T_i := \Lbrack M_i\Rbrack$ the integral $n$-current in $M^n_i$ induced by integration over $M^n_i$. Thus, $\{(M^n_i, d_{g_i}, T_i)\}$ is a sequence of integral current spaces. Given that there is a subsequence which converges in the Gromov-Hausdorff sense, it follows that a further subsequence (which we also denote by $M^n_i$) converges in the intrinsic flat sense to some $(Y_{\infty},d,T)$ where  $X_{\infty} \subset Y_{\infty}$, $d$ is the metric $d_{\infty}$ restricted to $Y_{\infty} =  \spt(T)$ and $\varphi_{\#}T_i$ converge weakly to $T \in \I_n(Z)$.

\begin{theorem}
With $\{(M^n_i, g_i)\}_{i=1}^{\infty}$ as above. Then, 
\[
\overline{Y_{\infty}} = X_{\infty}.
\]
\end{theorem}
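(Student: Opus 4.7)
The plan is to reduce to showing every regular point $x \in \mathcal{R} \subset X_\infty$ lies in $\overline{Y_\infty}$. Combined with the reverse inclusion $\overline{Y_\infty} \subset X_\infty$ (which is automatic from $Y_\infty = \spt(T) \subset X_\infty$ in the ambient space $Z$) and the density of the regular set guaranteed by Theorem \ref{S meas}, this suffices.

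Fix then a regular point $x \in \mathcal{R}$ and choose $p_j \in M_j$ with $\varphi_j(p_j) \to x$ in $Z$. Suppose for contradiction that $x \notin \overline{Y_\infty}$. Then $\{p_j\}$ is a Cauchy sequence in $Z$ with no limit in $\overline{Y_\infty}$, and Lemma \ref{SormaniPreprintLemma} supplies a $\delta > 0$ such that $S^{M_j}_r(p_j) \xrightarrow{\F} \0$ for almost every $r \in (0,\delta)$. Following the observation after that lemma, the contradiction will follow once we exhibit, for some such $r$, a non-trivial integral current space $N$ with $\dF(S^{M_j}_r(p_j), N) < \tfrac{1}{2}\dF(N,\0)$ for all $j$ sufficiently large.

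Take $N = S^{n-1}_r \subset \mathbb{R}^n$, the standard round sphere, whose intrinsic flat distance to $\0$ is a fixed positive quantity on the order of $r^n$ (realized by the filling ball). The crucial input is that under $|\Ric_{M_j}| \leq (n-1)H$ together with the non-collapsing condition, Anderson's harmonic-radius theorem promotes the Gromov-Hausdorff convergence near the regular point $x$ to $C^{1,\alpha}$ convergence of rescaled small geodesic balls. Concretely, for any $\eta > 0$ one can take $r > 0$ small and then $j_0$ large enough so that, for $j \geq j_0$, there is a diffeomorphism $\phi_j: B^{\mathbb{R}^n}_r(0) \to B^{M_j}_r(p_j)$ with
\begin{equation*}
(1+\eta)^{-2} g_{\mathbb{R}^n} \leq \phi_j^* g_j \leq (1+\eta)^2 g_{\mathbb{R}^n},
\end{equation*}
which is exactly the hypothesis \eqref{g1g2bound} of Proposition \ref{lemma-LakzianSormani}. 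Applying that proposition gives
\begin{equation*}
\dF\!\left(B^{M_j}_r(p_j),\, B^{\mathbb{R}^n}_r(0)\right) \leq C \sqrt{\eta}\, r^n,
\end{equation*}
since $\arccos((1+\eta)^{-1}) = O(\sqrt{\eta})$, $\diam = O(r)$, and the relevant volumes and surface areas are $O(r^n)$. The standard inequality $\dF(\partial T_1, \partial T_2) \leq \dF(T_1, T_2)$ --- any decomposition $T_1 - T_2 = A + \partial B$ yields $\partial T_1 - \partial T_2 = \partial A$, so the boundaries differ by a boundary of mass $\leq \M(A)$ --- then transfers this bound to give $\dF(S^{M_j}_r(p_j), S^{n-1}_r) \leq C\sqrt{\eta}\, r^n$. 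Choosing $\eta$ small enough relative to the ($r$-fixed) positive number $\dF(S^{n-1}_r, \0)$ and then $j$ large produces the required contradiction with $S^{M_j}_r(p_j) \xrightarrow{\F} \0$.

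The main obstacle is the upgrade from Gromov-Hausdorff to $C^{1,\alpha}$ convergence of rescaled balls: it is here that the upper Ricci bound is indispensable, since Anderson-type harmonic-radius estimates fail with only a lower Ricci bound, and without the diffeomorphism $\phi_j$ with controlled metric distortion Proposition \ref{lemma-LakzianSormani} does not apply. A secondary technical point is the simultaneous choice of a radius $r$ that both belongs to the full-measure set supplied by Lemma \ref{SormaniPreprintLemma} and lies in the $C^{1,\alpha}$-regular scale at $p_j$; since both conditions hold on sets of full measure near $0$, such an $r$ can be selected via the coarea formula applied to $d_{g_j}(\cdot, p_j)$.
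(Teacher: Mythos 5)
Your argument is correct and follows the paper's overall strategy: reduce to regular points and handle singular ones by density of $\mathcal{R}$ (Theorem \ref{S meas}), argue by contradiction through Lemma \ref{SormaniPreprintLemma}, and rule out $S^{M_j}_r(p_j)\xrightarrow{\F}\0$ by comparison with a Euclidean sphere, the two-sided Ricci bound entering through the upgrade to $C^{1,\alpha}$ convergence and the quantitative comparison coming from Proposition \ref{lemma-LakzianSormani}. The genuinely different step is how you pass from balls to spheres: the paper restricts the $C^{1,\alpha}$-close diffeomorphisms to distance spheres, applies Proposition \ref{lemma-LakzianSormani} to the rescaled $(n-1)$-manifolds, and then returns to scale $r$ via Lemma \ref{scalingLemma}; you instead apply Proposition \ref{lemma-LakzianSormani} to the balls at scale $r$ and invoke the monotonicity $\dF(\partial N_1,\partial N_2)\le\dF(N_1,N_2)$, whose one-line justification you correctly give (any decomposition $T_1-T_2=A+\partial B$ yields $\partial T_1-\partial T_2=\partial A$, and the restrictions of the embeddings to $\spt(\partial T_i)$ remain isometric). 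This variant buys something real: Lemma \ref{scalingLemma} becomes unnecessary, since the $r^n$ smallness falls directly out of the Lakzian--Sormani bound ($\max(a_1,a_2)=O(\sqrt{\eta}\,r)$ against volumes and boundary areas of size $O(r^n)+O(r^{n-1})$), and the boundary operation automatically produces the slice with its restricted (subspace) metric, which is the metric it carries in Lemma \ref{SormaniPreprintLemma}, so you sidestep the identification of a sphere's induced length metric with its subspace metric that the paper's direct sphere comparison implicitly uses. Two points should be made explicit in a final write-up: Anderson's harmonic-radius/$C^{1,\alpha}$ estimate needs almost-Euclidean volume of the small balls, which at a regular point comes from Cheeger--Colding together with Colding's volume convergence (as the paper spells out) and not from $\Vol(M_j)\ge v_0$ alone; and the diffeomorphism furnished by $C^{1,\alpha}$ convergence lands on a slightly shrunken ball (the paper's $\eta'$), so the comparison is really between balls of radius $(1-\eta')r$ -- harmless, because your threshold $\tfrac{1}{2}\dF(S^{n-1}_\rho,\0)\sim\tfrac{1}{2}\omega_n\rho^n$ and your error $C\sqrt{\eta}\,\rho^n$ scale identically in $\rho$, which is exactly why your quantifier order ($\eta=\eta(n)$ first, then $r$ small in the full-measure set, then $j$ large) closes the argument; the closing appeal to the coarea formula is unnecessary, since the $C^{1,\alpha}$ condition holds for all sufficiently small $r$, not merely almost every $r$.
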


\begin{proof}
By Sormani-Wenger's previous work \cite{SormaniWenger2010}, it is known that $\overline{Y_{\infty}} \subset X_{\infty}$. Thus, we need only verify that  $X_{\infty} \subset \overline{Y_{\infty}}$ as well. We suppose not and derive a contradiction. Let $p_i \in M_i$ be a sequence so that $\varphi_i(p_i) \subset Z$ is Cauchy and $\varphi_i(p_i) \to p_{\infty} \in X_{\infty}$ but which has no limit in $\overline{Y_{\infty}}$. It suffices to show that (\ref{Fto0condition}) in Lemma \ref{SormaniPreprintLemma} fails. Note that by construction $S^{M_i}_r(p_i)$ is an integral current space for any $i \in \mathbb{N}$; thus, to obtain a contradiction, we need only verify \\

{\em Claim.} For any $\dbar >0$, the set of all $r \in (0, \dbar)$ for which $S^{M_i}_r(p_i)$ does not converge to the flat integral current space has positive measure.  \\

{\bf Case 1:} $p_{\infty} \in \mathcal{R}$.\\
Suppose $p_{\infty} \in \mathcal{R} \subset X_{\infty}$, where $\mathcal{R}$ denotes the regular set of $X_{\infty}$ as defined by Cheeger-Colding. Since the sequence is assumed to be volume non-collapsed, it follows from \cite{ChCoI} that, as $r \to 0$, 
\[
\dGH\left(\frac{B^{X_{\infty}}_{r}(p_{\infty})}{r}, B^{\RR^n}_0(1)\right) \longrightarrow 0.
\]
Recall, here $\frac{B^{X_{\infty}}_{r}(p_{\infty})}{r}$ denotes the geodesic ball $B^{X_{\infty}}_{r}(p_{\infty}) \subset X_{\infty}$ with the distance metric rescaled by $1/r$ so that it is a unit ball. Furthermore, since $\varphi_i(p_i) \to p_{\infty}$, we have, for a given $r >0$, that  $\dGH\left(\dfrac{B^{M_i}_{r}(p_i)}{r}, \dfrac{B^{X_{\infty}}_{r}(p_{\infty})}{r} \right) \longrightarrow 0$, as $i \to \infty$. Combining these two facts, it follows that for any $\epsilon >0$, there exists $N_1:=N_1(\epsilon,n) >0$ such that, if $i > N_1$ and $0 < r< \frac{1}{N_1}$, then 
\be\label{ineqI}
\dGH \left(\frac{B^{M_i}_r(p_i)}{r}, B^{\RR^n}_0(1)  \right) < \epsilon.
\ee

By a consequence of Colding's Volume Convergence theorem (see \cite[Corollary 2.19]{Colding1997}), under this Gromov-Hausdroff convergence the volume of $\frac{B^{M_i}_r(p_i)}{r}$ and $B^{\RR^n}_1(0)$ can be taken arbitrarily close as well, given $i$ sufficiently large. More precisely, given any $\epsilon >0$, set $\delta_{\epsilon, n, H}:= \delta(\epsilon, n, H) >0$ as given in Colding's theorem and take $N_1(\delta_{\epsilon, n, H})>0$. It follows that,  for $i > N_1(\delta_{\epsilon, n, H})$ and $r < N_1(\delta_{\epsilon, n, H})^{-1}$, 
\be\label{vol close}
|\Vol \left(\frac{B^{M_i}_r(p_i)}{r}\right) - \Vol(B^{\RR^n}_1(0)) | < \epsilon.
\ee

Now it follows as a consequence of the proof of Anderson's $C^{1, \alpha}$-compactness theorem  for compact Riemannian manifolds with bounded Ricci curvature (c.f Theorem 3.2 and Remark 3.3 of \cite{Anderson1990a}) that, since the volume of unit balls in $M^n_i$ converges to the volume of the unit ball in $\RR^n$, a subsequence of the Riemannian metrics converge in the $C^{1,\alpha}$-topology on balls of smaller radius. That is to say, if we denote by $\epsilon_A$ the $\epsilon(n,H)>0$ prescribed in Remark 3.3 of \cite{Anderson1990a}, then for $i > N_1(\delta_{\epsilon_A \omega_n, n, H})$ and $r < N_1(\delta_{\epsilon_A \omega_n, n, H})^{-1}$,
\[
\Vol\left(\frac{B^{M_i}_r(p_i)}{r} \right) \geq (1-\epsilon_A) \omega_n.
\]
Thus, for any constant $\eta >0$, there exists a subsequence $(B^{M_{i_j}}_{(1-\eta)r} (p_{i_j}), \frac{1}{r^2}g_{i_j})$  converging in the $C^{1,\alpha}$ topology to $(B^{\RR}_1(0), g_{\RR^n})$. By definition, this means that for $j$ sufficiently large (as prescribed above) there exist a sequence of diffeomorphisms $F_j: B^{\RR^n}_{1-\eta}(0) \to B^{M_{i_j}}_{(1-\eta)r}(p_{i_j})$ such that the metrics $F^*_{j} (\frac{1}{r^2}g_{i_j})$ converge to $g_{\RR^n}$ on $B^{\RR^n}_1(0)$ in the $C^{1, \alpha}$-topology. Now, taking perhaps slightly larger $\eta' > \eta$ and restricting $\left. F_j \right|_{\del B^{\RR^n}_{1-\eta'}(0)}$ to distance spheres, it follows that $(\del B^{M_{i_j}}_{(1-\eta')r}(p_{i_j}), \frac{1}{r^2}g_{i_j})$ also converges to $(\del B^{\RR^n}_{1-\eta'}(0), g_{\RR^n})$ in the $C^{1,\alpha}$ topology. Thus, for $j$ sufficiently large, the two metrics are bounded in the Lipschitz distance as well. 

Focusing now on this subsequence, we employ an estimate of Lakzian-Sormani (see Proposition \ref{lemma-LakzianSormani}) which relates the intrinsic flat distance to the Lipschitz distance of two oriented Riemannian manifolds. Using the notation of Proposition \ref{lemma-LakzianSormani}, take 
\begin{align*}
M_j &= (\del B^{\RR^n}_{1-\eta'}(0), \left. (F_j \right|_{\del B^{\RR^n}_{1-\eta'}(0)})^*\frac{1}{r^2}g_{i_j}),\\
M &= (\del B^{\RR^n}_{1-\eta'}(0), g_{\RR^n}).
\end{align*}
Set, for each $j$, 
\begin{align*}
V_j &= \Vol_{n-1} (M_{i_j}), \hspace{1.1in} a_j  = \frac{\cos^{-1}(1+\frac{1}{j})^{-1}}{\pi} \diam(M_{i_j}),\\
V &= \Vol_{n-1} (M) = n \omega_n (1-\eta')^n, \hspace{.2in} a = \cos^{-1}(1+\frac{1}{j})^{-1} (1-\eta'),
\end{align*}
(note that $\Vol_{n-2}(\del M_j) = \Vol_{n-2}(\del M) = 0$). Also, set $\phi_j : = \max \{a_{i_j}, a\}(V_j + V)$. Note that $\phi_j \searrow 0$ as $j \to \infty$. Summarizing the above arguments and applying Proposition \ref{lemma-LakzianSormani}, we now have the following:

For any $\epsilon>0$, there exists $N_1'(\epsilon, n, H)>0$ such that 
\be\label{F bound by LS}
\dF(M_{i_j}, M) \leq \phi_{j} < \epsilon,
\ee
for all $i > N'_1$ and $r < 1/N'_1$. Note that both $M_{i_j}$ and $M$ are already settled complete. 

To complete the proof, choose an arbitrary $\dbar >0$ and fix some small positive constant $\ebar < \dbar$. It is possible to choose $N_2(\ebar, n)>0$ such that 
\be\label{phi bound}
\phi_j  < \frac{\ebar}{\dbar^n(1+\dbar)},
\ee
for all $j > N_2(\ebar, n)$. Now, take $\overline{N}(\ebar, n, H) = \max\{N_1', N_2\}$. It follows from (\ref{F bound by LS}) and (\ref{phi bound}) that for all $j > \overline{N}$ and $r \in (0, \overline{N}^{-1})$,
\be\label{final dF bound}
\dF\left(\frac{\del B^{M_{i_j}}_{(1-\eta')r}(p_{i_j})}{r}, \del B^{\RR^n}_{1-\eta'}(0)\right) \leq \phi_j < \frac{\ebar}{\dbar^n(1+\dbar)}.
\ee
Therefore, taking  $r \in (0, \overline{N}^{-1}) \cap (0, \dbar)$, any $\eta' \in (0,1)$, and $j$ sufficiently large, it follows (using Lemma  \ref{scalingLemma}, $r < \dbar$, and (\ref{final dF bound}) in turn) that
\begin{align}
\dF(S^{M_{i_j}}_{(1-\eta')r}(p_{i_j}), \del B^{\RR^n}_{(1-\eta')r}(0)) &\leq \dF\left(\frac{S^{M_{i_j}}_{(1-\eta')r}(p_{i_j})}{r}, \del B^{\RR^n}_{1-\eta'}(0) \right)r^n(1+r),\\ 
&\leq \dF\left(\frac{S^{M_{i_j}}_{(1-\eta')r}(p_{i_j})}{r}, \del B^{\RR^n}_{1-\eta'}(0) \right)\dbar^n(1+\dbar),\\
& < \ebar.
\end{align}

Finally, note that there exists $r_{\ebar}>0$ such that 
\be\label{dF lower}
\dF(\del B^{\RR^n}_{r_{\ebar}}(0), \0) = 2\ebar.
\ee
For example, this is achieved by taking $r_{\ebar} = \left(\frac{2\dbar}{\omega_n} \right)^{1/n}$. Thus, for all $r \in (0, \overline{N}^{-1}) \cap (0, \delta)$, and $j >\overline{N}$, one obtains
\[
\dF(S^{M_{i_j}}_{(1-\eta')r}(p_{i_j}), B^{\RR^n}_r(0)) < \ebar = \frac{\dF(B^{\RR^n}_{r_{\ebar}}(0), \0)}{2}.
\]
Therefore, by the triangle inequality for $\dF$ (and the remarks following the statement of Lemma \ref{SormaniPreprintLemma}) it follows that, given any small $\ebar>0$, there exists $\overline{N}(\ebar, n,H)>0$ such that, for any $\eta'\in(0,1)$,
\[
\dF(S^{M_{i_j}}_{(1-\eta')r}(p_{i_j}), \0) > \frac{\dF(B^{\RR^n}_r(0),\0)}{2} = \ebar >0,
\]
for all $r \in (0,\overline{N}^{-1})$ and $j > \overline{N}$.
This provides our contradiction. Namely, given any $\dbar >0$, the measure of the set of $r \in (0, \dbar)$ for which $S^{M_{j}}_{r}(p_{j})$ does not converge to the flat integral current space as $j \to \infty$ is bounded below by $\min\{ \overline{N}^{-1}, \dbar\}>0$. 

{\bf Case 2:} $p_{\infty} \notin \mathcal{R}$.\\
Suppose $p_{\infty}$ is not contained in the regular set of the limit space, so $p_{\infty} \in \mathcal{S}$. By Theorem \ref{S meas}, $\mathcal{R}$ is dense in $X_{\infty}$;  therefore, $p_{\infty} \in  \overline{\mathcal{R}}$, the metric completion of $\mathcal{R}$. Thus, we can find a sequence $(p_{{\infty}})_k \in \R$ so that $(p_{{\infty}})_k \longrightarrow p_{\infty}$ as $k \to \infty$. By Case 1, it follows that $(p_{\infty})_k \in X_{\infty}$, for all $k$, and therefore, $p_{\infty} \in \overline{Y_{\infty}}$. 

Thus, the Claim is proven. Therefore, $X_{\infty} \subset \overline{Y_{\infty}}$ and the theorem follows.
\end{proof}


\bibliographystyle{plain}
\bibliography{}

\end{document}